\newenvironment{poliabstract}[1]
  {\begin{abstract}}
  {\end{abstract}}
\newtheoremstyle{theoremnoperiod}
  {\topsep}   
  {\topsep}   
  {\normalfont}  
  {0pt}       
  {\bfseries} 
  {}          
  {5pt plus 1pt minus 1pt} 
  {}          
\newtheorem*{theorem*}{Théorème}
\newtheorem{theorem}{Théorème}[section]
\newtheorem{lemma}[theorem]{Lemme}
\newtheorem*{conjecture*}{Conjecture}
\numberwithin{equation}{section}
\theoremstyle{theoremnoperiod}
\newtheorem*{thmnodot*}{Théorème}
\newtheorem{thmnodot}[theorem]{Théorème}
\newtheorem{lemmanodot}[theorem]{Lemme}
\DeclareMathOperator{\li}{li}
\DeclareMathOperator{\R}{\mathbb{R}}
\DeclareMathOperator{\C}{\mathbb{C}}
\DeclareMathOperator{\sP}{\mathscr{P}}
\DeclareMathOperator{\sE}{\mathscr{E}}
\DeclareMathOperator{\sB}{\mathscr{B}}
\DeclareMathOperator{\sS}{\mathscr{S}}
\DeclareMathOperator{\sI}{\mathscr{I}}
\def\HH{{\mathscr H}}
\DeclareMathOperator{\e}{\rm e}
\def\d{\,{\rm d}}
\def\1{{\bf 1}}
\DeclareMathOperator{\PP}{\mathbb P}
\DeclareMathOperator{\sA}{\mathscr A}
\DeclareMathOperator{\cA}{\mathcal A}
\def\p{{p_{m,\nu}}}
\renewcommand{\leq}{\leqslant}
\renewcommand{\geq}{\geqslant}
\definecolor{vert}{rgb}{0,0.5,0}
\definecolor{violet}{rgb}{0.7,0.1,0.8}
\title{Étude statistique du facteur premier médian, 3 : \goodbreak lois de répartition}
\author{Jonathan Rotgé\thanks{Adresse e-mail : jonathan.rotge@etu.univ-amu.fr\\ 2020 {\it Mathematics Subject Classification}: 11N25, 11N37.\\ {\it Key words and phrases.} middle prime factor, gaussian distribution.}\\ \\ {\it\small  Université d'Aix-Marseille,\, Institut de Mathématiques de Marseille CNRS UMR 7373,}\\ {\it\small 163 Avenue De Luminy, Case 907, 13288 Marseille Cedex 9, FRANCE}}
\date{}
\begin{document}
\maketitle
\thispagestyle{empty}

\selectlanguage{english}
\begin{poliabstract}{Abstract} 
We consider the  Gaussian limit law for the distribution of the middle prime factor of an integer, defined according to multiplicity or not. We obtain an optimal bound for the speed of convergence, thereby improving on previous estimates available in the literature. 
 \end{poliabstract}

\selectlanguage{french}
\begin{poliabstract}{Résumé}
   Nous nous intéressons à l'approximation gaussienne de la répartition du facteur premier médian d'un entier, défini en tenant compte ou non, de la multiplicité. Nous obtenons une majoration optimale de la vitesse de convergence, améliorant ainsi les résultats existant dans la littérature.
\end{poliabstract}
\bigskip


\section{Introduction et énoncé du résultat}
Pour tout entier naturel $n\geq 2$, posons
\[\omega(n):=\sum_{p|n}1,\qquad\Omega(n):=\sum_{p^k||n}k,\]
et notons $\nu\in\{\omega,\Omega\}$ l'une ou l'autre de ces fonctions. Si $\{q_j(n)\}_{1\leq j\leq\omega(n)}$ désigne la suite croissante des facteurs premiers de $n$ comptés sans multiplicité et $\{Q_j(n)\}_{1\leq j\leq\Omega(n)}$ celle des facteurs premiers de $n$ comptés avec multiplicité, nous écrivons
\[\p(n):=\begin{cases}q_{\lceil\omega(n)/2\rceil}(n)&{\rm si\;} \nu=\omega\\ Q_{\lceil\Omega(n)/2\rceil}(n)&{\rm si\;} \nu=\Omega\end{cases},\qquad P^-(n):=q_1(n),\qquad P^+(n):=q_{\omega(n)}(n).\]
Dans toute la suite, les lettres $p$ et $q$ désignent des nombres premiers.
Posons
\[\Phi(v):=\frac1{\sqrt{2\pi}}\int_{-\infty}^v\e^{-t^2/2}\d t\quad(v\in\R),\]
la fonction de répartition de la loi normale.\par
Nous dirons qu'une fonction arithmétique $f$ est d'ordre normal $g$ si $g$ est une fonction arithmétique telle que, pour tout $\varepsilon>0$, on ait
\[\lim_{x\to\infty}\frac1x\bigg|\bigg\{n\leq x:|f(n)-g(n)|\leq\varepsilon g(n)\bigg\}\bigg|=1.\]

À l'aide d'une inégalité de type Turán-Kubilius, De Koninck et Kátai \cite{bib:dekoninck:normalorder} ont établi que l'ordre normal de la fonction arithmétique $\log_2\p(n)$ est $\tfrac12\log_2 x$.\footnote{Ici et dans la suite nous notons $\log_k$ la $k$-ième itérée de la fonction logarithme.}  Par la suite, De Koninck, Doyon et Ouellet \cite{bib:dekoninck:gaussian} ont précisé ce résultat en mettant en évidence un comportement gaussien. Posant
\[\cA_\nu(x,t):=\frac1x\Big|\Big\{n\leq x:\log_2\p(n)-\tfrac12\log_2 x<t\sqrt{\log_2 x}\Big\}\Big|\quad(x\geq 3,\,t\in\R),\]
ils obtiennent le résultat suivant. 
\begin{thmnodot*}{\bf (\cite[th. 1]{bib:dekoninck:gaussian}).}
	Soit $0<\varepsilon<\tfrac18$ fixé. Pour tout réel $t$ vérifiant $|t|\ll(\log_2 x)^{1/8-\varepsilon}$, nous avons
	\begin{equation}\label{eq:freq:KoDoOu}
		\cA_\nu(x,t)=\Phi(2t)+O\bigg(\frac1{\sqrt{\log_3 x}}\bigg).
	\end{equation}
\end{thmnodot*}
En 2023, McNew, Pollack et Singha Roy \cite{bib:pollack} établissent une version uniforme en $t$ de \eqref{eq:freq:KoDoOu}, avec un terme d'erreur plus précis, soit $O((\log_3 x)^{3/2}/\sqrt{\log_2 x})$.\par 
Nous nous proposons ici de fournir une version optimale de l'approximation \eqref{eq:freq:KoDoOu}. L'étude des lois locales menée dans \cite{bib:papier2} constitue l'ingrédient essentiel de la preuve.\par
Nous démontrons le résultat suivant.

\begin{theorem}\label{thm:gaussiandistrib}
	Nous avons uniformément
	\begin{equation}\label{eq:gaussiandistrib:uniform}
		\cA_\nu(x,t)=\Phi(2t)+O\bigg(\frac1{\sqrt{\log_2 x}}\bigg)\quad(x\geq 3,\,t\in\R).
	\end{equation}
	De plus, le terme d'erreur est optimal.
\end{theorem}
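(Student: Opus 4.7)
Le plan est de d\'ecomposer
\[\cA_\nu(x,t)=\frac1x\sum_{p}N_\nu(x,p)\,\1_{\log_2 p<\frac12\log_2 x+t\sqrt{\log_2 x}},\qquad N_\nu(x,p):=\big|\{n\leq x:\p(n)=p\}\big|,\]
et d'exploiter les lois locales de r\'epartition \'etablies dans \cite{bib:papier2}. Dans la plage centrale $|\log_2 p-\tfrac12\log_2 x|\leq C\sqrt{\log_2 x}\log_3 x$, ces lois fournissent un \'equivalent gaussien pour $N_\nu(x,p)/x$ sous la forme $p^{-1}g(\log_2 p)$, o\`u $g$ d\'esigne la densit\'e de la loi normale de moyenne $\tfrac12\log_2 x$ et de variance $\tfrac14\log_2 x$, assorti d'un reste dont la somme sur $p$ n'exc\`ede pas $O(1/\sqrt{\log_2 x})$. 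La contribution des nombres premiers hors de cette plage sera contr\^ol\'ee par les estimations de queue issues de \cite{bib:papier2}, elles-m\^emes sous-exponentielles en $\log_3 x$.

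Dans un second temps, par sommation d'Abel fond\'ee sur la formule de Mertens $\sum_{p\leq y}1/p=\log_2 y+M+O(1/\log y)$, je transformerai la somme sur les nombres premiers en int\'egrale gaussienne. Le changement de variable $u=2(\log_2 p-\tfrac12\log_2 x)/\sqrt{\log_2 x}$ la ram\`ene pr\'ecis\'ement \`a $\Phi(2t)$. L'erreur globale provient alors de trois sources --- reste gaussien local, reste de Mertens apr\`es Abel, et erreur de discr\'etisation somme--int\'egrale --- chacune d'ordre au plus $O(1/\sqrt{\log_2 x})$.

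L'obstacle principal consistera \`a garantir l'uniformit\'e en $t\in\R$, en particulier dans les queues $|t|\to\infty$, o\`u la petitesse de $\Phi(2t)$ (ou de $1-\Phi(2t)$) doit \^etre compar\'ee avec soin au terme d'erreur annonc\'e. Je traiterai ces cas s\'epar\'ement en invoquant des bornes de type Hardy--Ramanujan uniformes sur la distribution de $\nu$ d'une part, et la d\'ecroissance sous-gaussienne de $\Phi$ d'autre part ; un point d\'elicat sera de raccorder ces deux r\'egimes sans d\'egradation de la constante impl\'ecite.

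Enfin, pour l'optimalit\'e du terme d'erreur, j'exhiberai un choix de $t$ (typiquement $t\asymp 0$) et une suite de valeurs de $x$ le long desquels l'effet de quantification enti\`ere de $\lceil\nu(n)/2\rceil$ produit dans $\cA_\nu(x,t)$ un \'ecart \`a $\Phi(2t)$ d'ordre exact $1/\sqrt{\log_2 x}$. Cet argument transpose au cadre du facteur premier m\'edian la borne inf\'erieure de Berry--Esseen classique associ\'ee au th\'eor\`eme d'Erd\H{o}s--Kac.
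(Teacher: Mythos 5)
Votre sch\'ema pour la formule asymptotique \eqref{eq:gaussiandistrib:uniform} suit pour l'essentiel la m\^eme route que le texte : d\'ecomposition de $\cA_\nu(x,t)$ selon les lois locales $M_\nu(x,p)$, application de \cite[th. 1.1]{bib:papier2} dans une plage centrale autour de $\beta_p=\tfrac12$, contr\^ole des queues par des in\'egalit\'es de type Hardy--Ramanujan (le texte utilise \cite[th. 08, 09]{hall_tenenbaum} pour les valeurs extr\^emes de $\beta_p$, o\`u les lois locales ne s'appliquent plus), puis passage de la somme sur $p$ \`a une int\'egrale gaussienne et traitement s\'epar\'e des grandes valeurs de $|t|$. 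Votre variante --- sommation d'Abel fond\'ee sur la formule de Mertens plut\^ot que l'int\'egrale de Stieltjes coupl\'ee \`a une forme forte du th\'eor\`eme des nombres premiers --- est l\'egitime et m\^eme un peu plus \'economique, le reste $O(1/\log y)$ de Mertens \'etant n\'egligeable \`a l'\'echelle $y\geq\exp\big(\sqrt{\log x}\,\e^{-\sqrt{\log_3 x}}\big)$. Notez toutefois un point pass\'e sous silence : le facteur $(\log x)^{\kappa(\beta_p)}$ n'est gaussien qu'apr\`es d\'eveloppement de Taylor de $\kappa$ \`a l'ordre $4$ et de $\varrho_\nu$ \`a l'ordre $1$, et c'est pr\'ecis\'ement le terme lin\'eaire de ce dernier qui gouverne la taille du reste.

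C'est sur l'optimalit\'e que votre proposition comporte une lacune r\'eelle. Le m\'ecanisme que vous invoquez --- effet de r\'eseau d\^u \`a la quantification enti\`ere de $\lceil\nu(n)/2\rceil$, transposition de la borne inf\'erieure de Berry--Esseen associ\'ee \`a Erd\H{o}s--Kac --- ne fonctionne pas ici : la variable \'etudi\'ee est $\log_2\p(n)$, dont les valeurs possibles $\log_2 p$ forment un ensemble de pas $\asymp 1/p$, infiniment plus fin que l'\'ecart-type $\tfrac12\sqrt{\log_2 x}$ ; les sauts de la fonction de r\'epartition sont de taille $M_\nu(x,p)/x\ll 1/(p\sqrt{\log_2 x})$ avec $p$ de l'ordre de $\exp(\sqrt{\log x})$, et ne sauraient produire un \'ecart d'ordre $1/\sqrt{\log_2 x}$. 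La vraie source de l'optimalit\'e est l'asym\'etrie de la densit\'e locale autour de $\beta=\tfrac12$ : en poussant le d\'eveloppement de $h_{\nu,x}(\tfrac12+v)/h_{\nu,x}(\tfrac12)$ \`a l'ordre sup\'erieur, on voit appara\^itre un terme impair $\tau v$ avec $\tau\neq0$ (issu de $\varrho_\nu'(\tfrac12)\neq0$), dont l'int\'egration contre $\e^{-2v^2\log_2 x}$ sur $[-\eta_x,0]$ fournit, pour $t=0$, une contribution exacte $-(\tau+o(1))/(4\sqrt{\log_2 x})$, tous les autres restes \'etant $o(1/\sqrt{\log_2 x})$. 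Sans cet argument, ou un substitut \'equivalent, l'optimalit\'e du terme d'erreur reste non d\'emontr\'ee dans votre plan.
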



\section{Lois locales du facteur premier médian}

Notons
\[\kappa:=\lim_{n\to\infty}\bigg(\sum_{q\leq n}\frac1q-\log_2 n\bigg)\]
la constante de Meissel-Mertens et $\gamma$ la constante d'Euler-Mascheroni. Définissons alors 
\begin{align*}
	\HH_\nu(z)&:=\begin{cases}
		\displaystyle\e^{\kappa}\prod_{q}\Big(1+\frac{z}{q-1}\Big)\e^{-z/q}\quad(z\in\C)\qquad&\textnormal{si }\nu=\omega,\\
		\displaystyle\e^{\gamma z}\prod_{q}\Big(1-\frac{1}{q}\Big)^z\Big(1-\frac{z}{q}\Big)^{-1}\quad(\Re z<2)&\textnormal{si }\nu=\Omega.
	\end{cases}
\end{align*}
Notons $a_\nu:=0$ si $\nu=\omega$, $a_\nu:=\tfrac15$ si $\nu=\Omega$, définissons
\begin{equation}\label{def:fnu;rho;rhonu}
	\begin{gathered}
		f_\nu(z):=\frac{\HH_\nu(z)\e^{-\gamma/z}}{\Gamma(1+1/z)}\ (0<\Re z<2),\ \varrho_\nu(v):=\frac{(1+w)f_\nu(w)}{2w\sqrt{\pi vw}}\quad \bigg(a_\nu<v<1,\,w:=\sqrt{\frac{1-v}v}\bigg)
	\end{gathered}
\end{equation}
et posons
\[\beta_p=\beta_p(x):=\frac{\log_2 p}{\log_2 x},\qquad\varepsilon_x:=\frac1{\log_2 x}\quad(3\leq p\leq x).\]
Notons d'emblée que 
\[p=\e^{(\log x)^{\beta_p}}\quad (3\leqslant p\leqslant x).\]
Les lois locales de la répartition de $\p(n)$ dans $[1,x]$ sont données par les quantités
\[M_\nu(x,p):=|\{n\leq x:\p(n)=p\}|\quad(3\leq p\leq x).\]
L'énoncé suivant fournit une estimation de la quantité $M_{\nu}(x,p)$ pour de grandes valeurs de $p$.

\begin{thmnodot}{\bf (\cite[th. 1.1]{bib:papier2}).}\label{th:eq:Mp}
	Soit $\varepsilon>0$. Sous la condition $\tfrac15+\varepsilon<\beta_p<1-\varepsilon$, nous avons uniformément
	\begin{equation}\label{eq:Mp:pomega}
		M_\nu(x,p)=\frac{\{1+O(\varepsilon_x)\}\varrho_\nu(\beta_p)x}{p(\log x)^{1-2\sqrt{\beta_p(1-\beta_p)}}\sqrt{\log_2 x}}.
	\end{equation}
\end{thmnodot}

Pour tout ensemble de nombre premiers non vide $E$, posons
\[\omega(n,E):=\sum_{p|n,\,p\in E}1\quad(n\geq 1),\quad\Omega(n,E):=\sum_{p^k\|n,\,p\in E}k\quad (n\geq 1),\quad E(x):=\sum_{p\leq x,\;p\in E}\frac{1}{p}\quad (x\geq 2).\]
Dans la suite, nous utiliserons la notation $\nu(n,E)$ pour faire simultanément référence à $\omega(n,E)$ ou $\Omega(n,E)$. Définissons enfin
\[Q(v):=v\log v-v+1\quad(v>0).\]


\begin{lemmanodot}{\bf (\cite[th.\thinspace08, th.\thinspace09]{hall_tenenbaum}). }\label{hall_tenenbaum_resultat} Soit $p_0$ un nombre premier et $E$ un ensemble de nombres premiers non vide tel que $\min\{p:p\in E\}\geq p_0$. Pour tous $0<a<1<b<p_0$, nous avons
 	\begin{equation}\label{Omega_n_E}
 		\sum_{\substack{n\leq x\\\nu(n,E)\leq a E(x)}}1\ll_{a,p_0} x\e^{-E(x)Q(a)},\qquad\sum_{\substack{n\leq x\\\nu(n,E)\geq b E(x)}}1\ll_{b,p_0} x\e^{-E(x)Q(b)}.
	 \end{equation}
\end{lemmanodot}

Posons
\begin{equation}\label{def:eta_x}
	\eta_x:=\sqrt{\frac{\log_3 x}{\log_2 x}}\quad(x\geq 16).
\end{equation}
Nous ferons usage du résultat suivant.

\begin{lemma}\label{l:majo:Mnu:uniform}
	Sous la conditon $|\beta_p-\tfrac12|\geq\eta_x$, nous avons la majoration uniforme
	\begin{equation}\label{eq:majo:Mnu:uniform}
		M_{\nu}(x,p)\ll\frac{x}{p(\log_2 x)^{5/2}}\quad(3\leq p\leq x).
	\end{equation}
\end{lemma}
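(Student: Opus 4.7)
We partition the range $\{p \in [3,x] : |\beta_p - 1/2| \geq \eta_x\}$ into a central zone, where the local law~\eqref{eq:Mp:pomega} applies, and two extreme zones, where the Hall--Tenenbaum-type large-deviation estimates~\eqref{Omega_n_E} take over.

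\textbf{Central zone.} Fix a small $\varepsilon > 0$. For $\beta_p \in [1/5 + \varepsilon, 1 - \varepsilon]$, the algebraic key is the elementary inequality $1 - \sqrt{1-s} \geq s/2$ applied with $s := 4(\beta_p - 1/2)^2$: this yields
\[
1 - 2\sqrt{\beta_p(1-\beta_p)} = 1 - \sqrt{1 - 4(\beta_p - 1/2)^2} \geq 2(\beta_p - 1/2)^2 \geq 2\eta_x^2 = \frac{2\log_3 x}{\log_2 x},
\]
so that $(\log x)^{1 - 2\sqrt{\beta_p(1 - \beta_p)}} \geq \exp(2\log_3 x) = (\log_2 x)^2$. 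Since $\varrho_\nu$ is continuous, hence bounded, on $[1/5 + \varepsilon, 1 - \varepsilon]$, substitution in~\eqref{eq:Mp:pomega} immediately gives $M_\nu(x, p) \ll x/(p (\log_2 x)^{5/2})$.

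\textbf{Extreme zones.} For $\beta_p < 1/5 + \varepsilon$, the condition $\p(n) = p$ forces $\nu(n, E) \geq \lceil \nu(n)/2 \rceil$, where $E := \{q \leq p\}$ satisfies $E(x) = \beta_p \log_2 x + O(1)$. Writing $n = p^k m$ with $\gcd(m, p) = 1$ and fixing $\delta > 0$ small, we split: the contribution from $\nu(n) < (1-\delta)\log_2 x$ is controlled by the first inequality in~\eqref{Omega_n_E} applied to the set of all primes, while for $\nu(n) \geq (1-\delta)\log_2 x$ we apply the second inequality in~\eqref{Omega_n_E} to $m \leq x/p^k$ with parameter $b \approx (1-\delta)/(2\beta_p) > 2$, restricting $E$ to primes $\geq p_0$ for a fixed large $p_0$ so as to respect $b < p_0$. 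Each case gives a saving $\ll (\log x)^{-c}$ for some $c = c(\varepsilon, \delta, p_0) > 0$, comfortably beating $(\log_2 x)^{-5/2}$, and summation over $k \geq 1$ is a geometric series dominated by the $k = 1$ term. The zone $\beta_p > 1 - \varepsilon$ is symmetric with $F := \{q > p\}$, whose minimum exceeds $p$ so that $b < p_0$ is automatic; in the residual sub-range $p > \sqrt{x}$, a direct combinatorial inspection (using $p^2 > n \leq x$) forces $\nu(n) = 1$, hence $M_\nu(x, p) = 1$.

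\textbf{Main obstacle.} The delicate point is the calibration of $\varepsilon, \delta, p_0$ so that the Hall--Tenenbaum savings uniformly exceed $(\log_2 x)^{-5/2}$ across the extreme zones, the admissibility constraint $b < p_0$ is respected for the full range of $\beta_p$, and the three regime bounds glue coherently at the transitions $\beta_p \in \{1/5 + \varepsilon, 1 - \varepsilon\}$. A subsidiary issue is the degenerate sub-range where $p$ itself is bounded (so that $E(x) = O(1)$ and the Hall--Tenenbaum framework degenerates): there the condition $\p(n) = p$ forces $\nu(n) = O(1)$, and a Hardy--Ramanujan estimate closes the argument.
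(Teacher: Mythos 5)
Your treatment of the central zone $\beta_p\in[\tfrac15+\varepsilon,1-\varepsilon]$ is correct and coincides with the paper's, except that you replace the Taylor expansion $1-2\sqrt{v(1-v)}=2(v-\tfrac12)^2+O\big((v-\tfrac12)^4\big)$ used there by the exact inequality $1-2\sqrt{\beta_p(1-\beta_p)}\geq2(\beta_p-\tfrac12)^2$; this is a slightly cleaner way to extract the factor $(\log_2x)^2$ which, combined with the $\sqrt{\log_2x}$ already present in the denominator of \eqref{eq:Mp:pomega}, gives the exponent $5/2$.

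The genuine gap is in the extreme zones. With your choice $E=\{q\leq p\}$ (or its restriction to $q\geq p_0$), the parameter of the second inequality in \eqref{Omega_n_E} is $b\asymp(1-\delta)/(2\beta_p)$, which is unbounded as $\beta_p\to0$ (it can reach $\asymp\log_2x$ since $\beta_p\gg1/\log_2x$ for $p\geq3$); no fixed $p_0$ then satisfies the admissibility constraint $b<p_0$, and capping $b$ at a constant kills the saving because $E(x)=\beta_p\log_2x+O(1)$ becomes too small (for bounded $p$ it is $O(1)$ and the bound is vacuous). You flag this calibration as the ``main obstacle'' but do not resolve it, and this is exactly where the missing idea lies: one must replace $E$ by a \emph{fixed} set such as $\PP_1=[3,\exp\{(\log x)^{\varepsilon}\}]$, which contains $[3,p]$ whenever $\beta_p<\varepsilon$, satisfies $\PP_1(x/p)=\varepsilon\log_2x+O(1)$ independently of $p$, and turns the hypothesis $\nu(n,\PP_1)\geq\nu(n)/2\gg\log_2x$ into a fixed parameter $b=1/(16\varepsilon)$ that can be placed in $(1,3)$. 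Two further points. First, for $\nu=\Omega$ discarding the primes below $p_0$ is not harmless: a single small prime may carry half the multiplicity of $n$ (e.g.\ $n=2^Km$ with $K\approx\Omega(n)/2$), so $\Omega\big(n,E\cap[p_0,\infty)\big)$ need not be $\geq\Omega(n)/2-O(1)$; such $n$ must be treated separately (they are rare, since $2^K\mid n$ with $K\gg\log_2x$ already saves a power of $\log x$). Second, your closing claim for $p>\sqrt x$ gives $M_\nu(x,p)=1$, which does \emph{not} imply \eqref{eq:majo:Mnu:uniform} once $p>x/(\log_2x)^{5/2}$; the very top of the range therefore still needs an argument (this boundary effect is glossed over in the paper as well and is harmless for the application, but your proof cannot claim to have ``closed'' that sub-range).
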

\begin{proof}
	Posons
	\[\gamma_\nu(v):=\begin{cases}
		\tfrac12(1-3v)\qquad&\textnormal{si }0<v\leq a_\nu,\\
		1-2\sqrt{v(1-v)}&\textnormal{si }a_\nu\leq v<1.
	\end{cases}\]
	Soit $0<\varepsilon<\tfrac1{17}$. Nous distinguons selon les valeurs de $\beta_p$.\par
	Considérons le cas $\varepsilon<\beta_p\leq\tfrac15$. Puisque 
	\[\gamma_\nu(v)\geq\tfrac12(1-3v)\geq\tfrac15\quad(v\leq\tfrac15),\]
	nous déduisons directement de \cite[th. 1.1]{bib:papier2} que
	\begin{equation}\label{eq:majo:Mnu:uniform:smallp}
		M_\nu(x,p)\ll \frac x{p(\log x)^{1/5}}\quad (\varepsilon\leq\beta_p\leq\tfrac15).
	\end{equation}\par
	Lorsque $\tfrac15\leq\beta_p\leq1-\varepsilon$, nous avons
	\[\gamma_\nu(\beta_p)=1-2\sqrt{\beta_p(1-\beta_p)}\geq\gamma_\nu(\tfrac12-\eta_x)=\gamma_\nu(\tfrac12+\eta_x).\]
	Un développement de Taylor à l'ordre $4$ fournit
	\[\gamma_\nu(\tfrac12+v)=2v^2+O(v^4)\quad(v\ll 1).\]
	Une nouvelle application de \cite[th. 1.1]{bib:papier2} permet alors d'obtenir
	\begin{equation}\label{eq:majo:Mnu:uniform:bigp}
		M_\nu(x,p)\ll\frac x{p(\log x)^{2\eta_x^2+O(\eta_x^4)}\sqrt{\log_2 x}}\ll\frac x{p(\log_2 x)^{5/2}}\quad(\tfrac15\leq\beta_p\leq1-\varepsilon,\, |\beta_p-\tfrac12|\geq\eta_x).
	\end{equation}
	\par Il reste à traiter le cas des valeurs extrêmes de $\beta_p$. Lorsque $\beta_p<\varepsilon$, nous distinguons deux cas selon les valeurs de $\nu(n)$. Si $\nu(n)\leq\tfrac14\log_2 x$, la première formule \eqref{Omega_n_E} fournit la majoration
	\begin{equation}\label{eq:majo:smallbeta:smallk}
		\sum_{\substack{n\leq x,\,p|n\\ \nu(n,\PP)\leq(\log_2 x)/4}}1\leq\sum_{\substack{d\leq x/p\\ \omega(d,\PP)\leq\{1+\PP(x/p)\}/4}}1\ll\frac{x}{p(\log x)^{Q(1/4)}}.	\end{equation}
	\begin{sloppypar}
	\noindent Si $\nu(n)\geq\tfrac14\log_2 x$, alors $n$ possède au moins $(\log_2 x)/8$ facteurs premiers dans l'intervalle $[3,\exp\{(\log x)^\varepsilon\}]$. Posons $\PP_1:=[3,\exp\{(\log x)^\varepsilon\}]$. Le théorème de Mertens permet d'obtenir, pour $x$ assez grand, la majoration
	\end{sloppypar}
	\[\PP_1\Big(\frac xp\Big)=\varepsilon\log_2 x+O(1)<2\varepsilon\log_2 x,\]
	de sorte qu'une application de la deuxième formule \eqref{Omega_n_E} fournit
	\begin{equation}\label{eq:majo:smallbeta:bigk}
		\sum_{\substack{n\leq x,\, p|n\\ \nu(n,\PP_1)\geq(\log_2 x)/8}}1\leq\sum_{\substack{d\leq x/p\\ \Omega(d,\PP_1)\geq \PP_1(x/p)/16\varepsilon}}1\ll\frac{x}{p(\log x)^{Q(1/16\varepsilon)}},
	\end{equation}
	puisque $1/16\varepsilon\geq\tfrac{17}{16}>1$.\par
	Enfin, le cas $\beta_p>1-\varepsilon$ relève d'un traitement analogue au précédent.\par
	La majoration \eqref{eq:majo:Mnu:uniform} est alors une conséquence de \eqref{eq:majo:Mnu:uniform:smallp}, \eqref{eq:majo:Mnu:uniform:bigp}, \eqref{eq:majo:smallbeta:smallk} et \eqref{eq:majo:smallbeta:bigk}.
\end{proof}

\section{Preuve du Théorème \ref{thm:gaussiandistrib}}
	Rappelons la définition de $\cA_\nu$ en \eqref{thm:gaussiandistrib}. Posons 
	\[\lambda(x,t):=\tfrac12\log_2 x+t\sqrt{\log_2 x},\quad \sP_{x,t}:=[3,\exp\big(\e^{\lambda(x,t)}\big)\big[\quad(x\geq 3,\,t\in\R),\]
	de sorte que
	\begin{align*}
		\cA_\nu(x,t)&=\frac1x\sum_{\substack{n\leq x\\ \log_2 \p(n)<\lambda(x,t)}}1=\frac1x\sum_{\substack{p\leq x\\ \log_2 p<\lambda(x,t)}}\sum_{\substack{n\leq x\\ 			\p(n)=p}}1\\
		&=\frac1x\sum_{\substack{p\leq x\\ p<\exp(\e^{\lambda(x,t)})}}M_\nu(x,p)=\frac1x\sum_{p\in\sP_{x,t}}M_\nu(x,p).
	\end{align*}
	Définissons
	\[\sP_{x,t}^+:=\Big[\exp\Big(\sqrt{\log x}\e^{-\sqrt{\log_3 x}}\Big),\exp\big(\e^{\lambda(x,t)}\big)\Big[,\quad \sP_{x,t}^-:=\sP_{x,t}\smallsetminus\sP_{x,t}^+\quad(x\geq 	16,\, t\in\R)\]
	et remarquons d'emblée que $\sP_{x,t}^+=\varnothing$ dès lors que $t<-\sqrt{\log_3 x}$, et donc $\sP_{x,t}^-=\sP_{x,t}$. Notons également que
	\[p\in\sP_{x,t}^+\Leftrightarrow \tfrac12-\eta_x\leq\beta_p<\tfrac12+t\sqrt{\varepsilon_x},\]
	de sorte que, d'après \eqref{eq:Mp:pomega}, nous pouvons écrire
	\begin{equation}\label{eq:defcPnu;cEnu}
		\sA_\nu(x,t)=\frac1x\bigg\{\sum_{p\in\sP_{x,t}^+}\frac{\{1+O(\varepsilon_x)\}\varrho_\nu(\beta_p)x}{p(\log x)^{1-2\sqrt{\beta_p(1-\beta_p)}}\sqrt{\log_2 x}}+	\sum_{p\in\sP_{x,t}^-}M_\nu(x,p)\bigg\}=:\sS_\nu(x,t)+\sE_\nu(x,t).
	\end{equation}
Rappelons la définition de $\varrho_\nu$ en \eqref{def:fnu;rho;rhonu} et posons
	\begin{equation}\label{eq:def:kappa;gnux;cPnustar}
		\begin{gathered}
			\kappa(v):=2\sqrt{v(1-v)}-1\ (0<v<1),\quad g_{\nu,x}(p):=\frac{\varrho_\nu(\beta_p)(\log x)^{\kappa(\beta_p)}}p\ (x\geq 3,\,a_\nu<\beta_p<1),\\
			\sS_\nu^*(x,t):=\sum_{p\in\sP_{x,t}^+}g_{\nu,x}(p)\quad(x\geq 16,\ t\in\R).
		\end{gathered}
	\end{equation}
	Réécrivons $\sS_\nu^*$ sous la forme d'une intégrale de Stieltjes. Il vient
	\begin{equation}\label{eq:evalcPstar:decomp}
		\begin{aligned}
			\sS_\nu^*(x,t)&=\int_{\sP_{x,t}^+}g_{\nu,x}(v)\d\pi(v)=\int_{\sP_{x,t}^+}g_{\nu,x}(v)\d\li(v)+\int_{\sP_{x,t}^+}g_{\nu,x}(v)\d\{\pi(v)-\li(v)\}\\
			&=:\sI_1(x,t)+\sI_2(x,t).
		\end{aligned}
	\end{equation}\par
	Nous traitons $\sI_2$ comme un terme d'erreur. Une forme forte du théorème des nombres premiers fournit l'existence d'une constante $c>0$ telle que $\pi(t)-\li(t)\ll t\e^{-c\sqrt{\log t}}\ (t\geq 2)$ (voir {\it e.g.} \cite[Théorème II.4.1]{bib:tenenbaum}). Une intégration par parties implique
\begin{align}\label{eval_J2_intermediaire}
	\sI_2(x,t)=\Big[g_{\nu,x}(v)(\pi(v)-\li(v))\Big]_{\sP_{x,t}^+}-\int_{\sP_{x,t}^+}g'_{\nu,x}(v)\{\pi(v)-\li(v)\}\d v.
\end{align}
D'une part, le terme entre crochets peut être majoré par
\begin{equation}\label{eq:evalcPstar:cP21}
	(\log x)^{\kappa\big(\beta_{\exp\{\sqrt{\log x}\exp({-\sqrt{\log_3 x}})}\big)}\exp\Big(-c(\log x)^{1/4}\e^{-\sqrt{\log_3 x}/2}\Big)\ll\e^{-(\log x)^{1/5}}.
\end{equation}
D'autre part, puisque $\log g_{\nu,x}(v)=\log\varrho_\nu(\beta_v)+\kappa(\beta_v)\log_2 x-\log v$, nous pouvons écrire
\begin{align*}
	\frac{g'_{\nu,x}(v)}{g_{\nu,x}(v)}=O\Big(\frac{1}{v\log v}\Big)-\frac{2\log_2 v}{v\log v\log_2 x}+O\Big(\frac{1}{v\log v}\Big)-\frac{1}{v}=-\frac1v+O\Big(\frac{1}{v\log v}\Big).
\end{align*}
Ainsi $g'_{\nu,x}(v)<0$ pour $x$ assez grand, donc $g_{\nu,x}$ est décroissante sur $\sP_{x,t}^+$. Une nouvelle intégration par parties permet alors de montrer que l'intégrale de \eqref{eval_J2_intermediaire} est
\begin{equation}\label{eq:evalcPstar:cP22}
	\ll \int_{\sP_{x,t}^+}g_{\nu,x}(v)\e^{-c\sqrt{\log v}}\d v+\e^{-(\log x)^{1/5}}\ll \e^{-(\log x)^{1/5}}.
\end{equation}
De \eqref{eq:evalcPstar:cP21} et \eqref{eq:evalcPstar:cP22}, nous déduisons que 
\begin{equation}\label{eq:evalcPstar:cP2}
	\sI_2(x,t)\ll\e^{-(\log x)^{1/5}}.
\end{equation}
\par 
Enfin, nous avons
\begin{equation}\label{eq:evalcPstar:cP1}
	\sI_1(x,t)=\int_{\sP_{x,t}^+}\frac{g_{\nu,x}(v)}{\log v}\d v,
\end{equation}
de sorte que, d'après \eqref{eq:evalcPstar:decomp}, \eqref{eq:evalcPstar:cP2} et \eqref{eq:evalcPstar:cP1}, nous obtenons
\begin{equation}\label{eq:eval:cPstar}
	\sS_\nu^*(x,t)=\int_{\sP_{x,t}^+}\frac{g_{\nu,x}(v)}{\log v}\d v+O\big(\e^{-(\log x)^{1/5}}\big).
\end{equation}
Rappelons la définition de $\eta_x$ en \eqref{def:eta_x} et posons
\[\sB^+_{x,t}:=\big[\tfrac12-\eta_x,\tfrac12+t\sqrt{\varepsilon_x}\big[\quad(x\geq 16,\, t\in\R),\quad h_{\nu,x}(v):=\varrho_\nu(v)(\log x)^{\kappa(v)}\quad(a_\nu<v<1).\]
	Remarquons que $\sB^+_{x,t}=\varnothing$ dès lors que $t<-\eta_x$. Rappelons les définitions de $\sS_\nu$ en \eqref{eq:defcPnu;cEnu} et de $\sS_\nu^*$ en \eqref{eq:def:kappa;gnux;cPnustar}. Avec le changement de variables $v=\e^{(\log x)^\beta}$, nous obtenons, d'après \eqref{eq:eval:cPstar},
	\begin{equation}\label{eq:sPnu:inter}
		\sS_\nu(x,t)=\{1+O(\varepsilon_x)\}\sqrt{\log_2 x}\int_{\sB_{x,t}^+}h_{\nu,x}(\beta)\d\beta.
	\end{equation}
	\par Nous évaluons $\sS_\nu$ en distinguant plusieurs plages de valeurs de  $t$.\par
	Considérons dans un premier temps le cas $t\in\big[-\sqrt{\log_3 x},\sqrt{\log_3 x}\big]$. La fonction $\kappa$ atteint son maximum $0$ en $\beta_0=\tfrac12$. Un développement de Taylor à l'ordre $4$ fournit
	\begin{equation}\label{eq:taylor:kappa}
		\kappa(\beta_0+v)=-2v^2+O(v^4)\quad(v\ll 1).
	\end{equation}
	Un second développement de Taylor à l'ordre $1$ fournit
	\begin{equation}\label{eq:taylor:varrho}
		\frac{\varrho_\nu(\beta_0+v)}{\varrho_\nu(\beta_0)}=1+O(v)\quad(v\ll 1).
	\end{equation}
	Les formules \eqref{eq:taylor:kappa} et \eqref{eq:taylor:varrho} fournissent le 	développement
	\begin{equation}\label{eq:taylor:hnu}
		\frac{h_{\nu,x}(\beta_0+v)}{h_{\nu,x}(\beta_0)}=\{1+O(v+v^4\log_2 x)\}\e^{-2v^2\log_2 x}\quad(x\geq 3,\,v\ll 1),
	\end{equation}
	de sorte que
	\begin{equation}\label{eq:integral:rapportgnux:decomp}
		\int_{-\eta_x}^{t\sqrt{\varepsilon_x}}\frac{h_{\nu,x}(\beta_0+v)}{h_{\nu,x}(\beta_0)}\d v=\int_{-\eta_x}^{t\sqrt{\varepsilon_x}}\e^{-2v^2\log_2 x}\d v+O\bigg(\int_{-\eta_x}^{t\sqrt{\varepsilon_x}}\{|v|+v^4\log_2 x\}\e^{-2v^2\log_2 x}\d v\bigg).
	\end{equation}
	Évaluons l'intégrale principale du membre de droite de \eqref{eq:integral:rapportgnux:decomp}. Le changement de variables $u=2v\sqrt{\log_2 x}$ fournit
	\[\int_{-\eta_x}^{t\sqrt{\varepsilon_x}}\e^{-2v^2\log_2 x}\d v=\frac{1}{2\sqrt{\log_2 x}}\int_{-2\sqrt{\log_3 x}}^{2t}\e^{-u^2/2}\d u.\]
	Puisque, par ailleurs
	\begin{equation}\label{eq:majo:gaussian:tail}
		\int_{-\infty}^{-2\sqrt{\log_3 x}}\e^{-u^2/2}\d u\ll\frac{\e^{-2\log_3 x}}{\sqrt{\log_3 x}}\ll\varepsilon_x^2,
	\end{equation}
	nous obtenons
	\begin{equation}\label{eq:int:rapportgnux:main}
		\int_{-\eta_x}^{t\sqrt{\varepsilon_x}}\e^{-2v^2\log_2 x}\d v=\frac{\{1+O(\varepsilon_x^2)\}\sqrt\pi\Phi(2t)}{\sqrt{2\log_2 x}}.
	\end{equation}
	Évaluons désormais le terme d'erreur de \eqref{eq:integral:rapportgnux:decomp}. Nous avons d'une part
	\begin{equation}\label{eq:int:rapportgnux:error1}
		\int_{-\eta_x}^{t\sqrt{\varepsilon_x}}|v|\e^{-2v^2\log_2 x}\d v\leq\int_{-\eta_x}^{|t|\sqrt{\varepsilon_x}}|v|\e^{-2v^2\log_2 x}\d v\ll\varepsilon_x,
	\end{equation}
	uniformément par rapport à $t$, et d'autre part,
	\begin{equation}\label{eq:int:rapportgnux:error2}
		(\log_2 x)\int_{-\eta_x}^{t\sqrt{\varepsilon_x}}v^4\e^{-2v^2\log_2 x}\d v\ll\{1+t^3\e^{-2t^2}\}\varepsilon_x^{3/2}\ll\varepsilon_x^{3/2}.
	\end{equation}
	En regroupant, les estimations \eqref{eq:int:rapportgnux:main}, \eqref{eq:int:rapportgnux:error1} et \eqref{eq:int:rapportgnux:error2}, nous obtenons, d'après \eqref{eq:integral:rapportgnux:decomp},
	\begin{equation}\label{eq:integral:rapportgnux:final}
		\int_{-\eta_x}^{t\sqrt{\varepsilon_x}}\frac{h_{\nu,x}(\beta_0+v)}{h_{\nu,x}(\beta_0)}\d v=\frac{\{\sqrt{\pi}+O(\sqrt{\varepsilon_x})\}\Phi(2t)}{\sqrt{2\log_2 x}}.
	\end{equation}
	En remarquant que $h_{\nu,x}(\beta_0)=\sqrt{2/\pi}$, nous déduisons des estimations \eqref{eq:sPnu:inter} et \eqref{eq:integral:rapportgnux:final} que
	\begin{equation}\label{eq:eval:cPnu:mediumt}
		\sS_{\nu}(x,t)=\Phi(2t)+O\big(\sqrt{\varepsilon_x}\big).
	\end{equation}
	Il reste à évaluer le terme d'erreur $\sE_\nu(x,t)$ défini en \eqref{eq:defcPnu;cEnu}. Puisque $p\in\sP_{x,t}^-\Rightarrow\beta_p\leq\tfrac12-\eta_x$, nous sommes en mesure d'appliquer la majoration \eqref{eq:majo:Mnu:uniform}. Nous obtenons ainsi
	\begin{equation}\label{eq:eval:cEnu:mediumt}
		\sE_\nu(x,t)=\frac1x\sum_{p\in\sP_{x,t}^-}M_\nu(x,p)\ll\frac{1}{(\log_2 x)^{5/2}}\sum_{p\in\sP_{x,t}^-}\frac1p\ll\varepsilon_x^{3/2},
	\end{equation}
	d'après le théorème de Mertens. En regroupant les estimations \eqref{eq:eval:cPnu:mediumt} et \eqref{eq:eval:cEnu:mediumt} nous obtenons bien \eqref{eq:gaussiandistrib:uniform}.
	\par
	Supposons désormais que $t\leq-\sqrt{\log_3 x}$. Rappelons dans ce cas que $\sP_{x,t}^+=\varnothing$, $\sP_{x,t}^-=\sP_{x,t}$ et, par conséquent, $\cA_\nu(x,t)=\sE_\nu(x,t)$. D'une part, à l'aide d'une nouvelle utilisation de \eqref{eq:majo:Mnu:uniform} et du théorème de Mertens, nous avons,
	\[\cA_\nu(x,t)=\frac1x\sum_{p<\exp(\e^{\lambda(x,t)})}M_{\nu}(x,p)\leq\frac1x\sum_{\substack{p\leq x\\ \beta_p<1/2-\eta_x}}M_{\nu}(x,p)\ll\varepsilon_x^{3/2},\]
	et d'autre part, d'après \eqref{eq:majo:gaussian:tail},
	\[\Phi(-2\sqrt{\log_3 x})\ll\varepsilon_x^2.\]
	Les deux membres de \eqref{eq:gaussiandistrib:uniform} sont donc absorbés dans le terme d'erreur $O(\sqrt{\varepsilon_x})$.\par
	Il reste à traiter le cas $t\geq\sqrt{\log_3 x}$. Nous pouvons écrire
	\begin{equation}\label{eq:cAnu:bigt:decomp}
		\begin{aligned}
			\cA_{\nu}(x,t)&=\frac1x\bigg\{\sum_{p\leq\exp(\e^{\lambda(x,\sqrt{\log_3 x})})}M_{\nu}(x,p)+\sum_{\exp(\e^{\lambda(x,\sqrt{\log_3 x})})<p\leq\exp(\e^{\lambda(x,t)})}M_{\nu}(x,p)\bigg\}\\
			&=:\cA_\nu\big(x,\sqrt{\log_3 x}\big)+E_\nu(x,t).
		\end{aligned}
	\end{equation}
	Nous avons déjà démontré que
	\begin{equation}\label{eq:Anu:bigt:main}
		\cA_{\nu}\big(x,\sqrt{\log_3 x}\big)=\Phi(2\sqrt{\log_3 x})+O(\sqrt{\varepsilon_x})=1+O(\sqrt{\varepsilon_x}).
	\end{equation}
	Il nous reste à évaluer le terme d'erreur $E_\nu(x,t)$. Puisque $p\in[\exp(\e^{\lambda(x,\sqrt{\log_3 x})}),\exp(\e^{\lambda(x,t)})]$, nous avons $\tfrac12+\eta_x<\beta_p\leq\tfrac12+t\sqrt{\varepsilon_x}$. Nous pouvons appliquer une nouvelle fois la majoration \eqref{eq:majo:Mnu:uniform} et obtenir ainsi
	\begin{equation}\label{eq:Anu:bigt:error}
		E_\nu(x,t)\ll\frac1{(\log_2 x)^{5/2}}\sum_{\exp(\e^{\lambda(x,\sqrt{\log_3 x})})<p\leq\exp(\e^{\lambda(x,t)})}\frac1p\ll\varepsilon_x^{3/2}.
	\end{equation}
	En regroupant les estimations \eqref{eq:Anu:bigt:main} et \eqref{eq:Anu:bigt:error}, nous obtenons, d'après \eqref{eq:cAnu:bigt:decomp},
	\[\cA_\nu(x,t)=1+O(\sqrt{\varepsilon_x}).\]
	Enfin, puisque
	\[\Phi(2t)=1+O(\e^{-2t^2})=1+O(\sqrt{\varepsilon_x})\quad (t\geq\sqrt{\log_3 x}),\]
	l'expression $1+O(\sqrt{\varepsilon_x})$ constitue une formule asymptotique pour chacun des deux membres de~\eqref{eq:gaussiandistrib:uniform}. \par 
	Il reste à justifier l’optimalité du terme d’erreur de \eqref{eq:gaussiandistrib:uniform}. Remarquons d’emblée
que le terme d’ordre $\sqrt{\varepsilon_x}$ est issu des formules \eqref{eq:Mp:pomega}, \eqref{eq:int:rapportgnux:error1}, \eqref{eq:int:rapportgnux:error2}, tous les autres termes d’erreur sous-jacents étant $o(\sqrt{\varepsilon_x})$. L’étude menée dans \cite{bib:papier1} de la valeur moyenne du facteur premier médian fournit une expression explicite du terme d’erreur dans la formule \eqref{eq:Mp:pomega}, qui est donc optimal et $o(\sqrt{\varepsilon_x})$. Par ailleurs, un développement de Taylor à l’ordre 6 permet de préciser la formule \eqref{eq:taylor:hnu} sous la forme
	\[\frac{h_{\nu,x}(\beta_0+v)}{h_{\nu,x}(\beta_0)}=\{1+\tau v-2v^4\log_2 x+O(v^2+v^6\log_2 x)\}\e^{-2v^2\log_2 x}\quad(x\geq 3,\, v\ll 1),\]
	où $\tau$ est une constante absolue non nulle. Un calcul de routine montre alors que, pour $t=0$,
	\[\sqrt{\log_2 x}\int_{-\eta_x}^0\{\tau v-2v^4\log_2 x\}\e^{-2v^2\log_2 x}\d v=-\frac{\tau+o(1)}{4\sqrt{\log_2 x}}\asymp\sqrt{\varepsilon_x}.\]
	Cela termine la démonstration.\qed \bigskip
	
	\noindent{\it Remerciements.} L'auteur tient à remercier chaleureusement le professeur Gérald Tenenbaum pour l'ensemble de ses conseils et remarques avisés ainsi que pour ses relectures attentives durant la réalisation de ce travail.



\end{document}